\newcommand{\rt}{\mathnormal{\mathsf{RT}}}
\newcommand{\zrt}{\mathnormal{\mathbb Z\text{-}\mathsf{RT}}}
\newcommand{\hind}{\mathnormal{\mathsf{HT}}}
\newcommand{\aht}{\mathnormal{\mathsf{AHT}}}
\newcommand{\apaht}{\mathnormal{\mathrm{ap}\mathsf{AHT}}}
\newcommand{\sepzrt}{\mathnormal{\mathrm{sep}\mathbb Z\text{-}\mathsf{RT}}}
\newcommand{\rca}{\mathnormal{\mathsf{RCA}}_0}
\newcommand{\aca}{\mathnormal{\mathsf{ACA}}_0}
\newcommand{\wkl}{\mathnormal{\mathsf{WKL}}_0}
\newcommand{\acaplus}{\mathnormal{\mathsf{ACA}}_0^+}
\newcommand{\weileq}{\mathop{\leq_{\mathsf{W}}}}
\newcommand{\weisleq}{\mathop{\leq_{\mathrm{s}\mathsf{W}}}}
\newcommand{\weieq}{\mathop{\equiv_{\mathsf{W}}}}
\newcommand{\weiseq}{\mathop{\equiv_{\mathrm{s}\mathsf{W}}}}
\newcommand{\isigma}{\mathnormal{\mathsf{I}\mathrm{\Sigma}_1^0}}
\DeclareMathOperator{\afs}{AFS}
\DeclareMathOperator{\fs}{FS}
\newtheorem{theorem}{Theorem}
\newtheorem{corollary}[theorem]{Corollary}
\newtheorem{question}[theorem]{Questions}
\theoremstyle{definition}
\newtheorem{definition}[theorem]{Definition}
\author[B. Aceves]{Bruno Fernando Aceves-Mart\'{\i}nez}
\address{
Escuela Superior de F\'{\i}sica y Matem\'aticas\\
Instituto Polit\'ecnico Nacional\\
Av. Instituto Polit\'ecnico Nacional s/n Edificio 9, 
Col. San Pedro Zacatenco, Alcald\'{\i}a Gustavo A. Madero, 07738, CDMX, Mexico. 
}
\curraddr{Departamento de Matem\'aticas, Centro de Investigaci\'on y Estudios Avanzados, CDMX, Mexico.}
\email{bfaceves@math.cinvestav.mx}
\author[D. Fern\'andez]{David J. Fern\'andez-Bret\'on}
\address{
Escuela Superior de F\'{\i}sica y Matem\'aticas\\
Instituto Polit\'ecnico Nacional\\
Av. Instituto Polit\'ecnico Nacional s/n Edificio 9, 
Col. San Pedro Zacatenco, Alcald\'{\i}a Gustavo A. Madero, 07738, CDMX, Mexico. 
}
\email{dfernandezb@ipn.mx}
\urladdr{https://dfernandezb.web.app}
\author[L. Romero]{L. F. Romero-Garc\'{\i}a}
\address{
Escuela Superior de F\'{\i}sica y Matem\'aticas\\
Instituto Polit\'ecnico Nacional\\
Av. Instituto Polit\'ecnico Nacional s/n Edificio 9, 
Col. San Pedro Zacatenco, Alcald\'{\i}a Gustavo A. Madero, 07738, CDMX, Mexico. 
}
\email{lromerog1801@alumno.ipn.mx}
\author[L. Villag\'omez]{Luis F. Villag\'omez-Canela}
\address{
Escuela Superior de F\'{\i}sica y Matem\'aticas\\
Instituto Polit\'ecnico Nacional\\
Av. Instituto Polit\'ecnico Nacional s/n Edificio 9, 
Col. San Pedro Zacatenco, Alcald\'{\i}a Gustavo A. Madero, 07738, CDMX, Mexico. 
}
\email{lvillagomezc1500@alumno.ipn.mx}
\title[Adjacent Hindman's and $\mathbb Z$-Ramsey's]{The adjacent Hindman's theorem and the $\mathbb Z$-Ramsey's theorem}
\subjclass[2020]{Primary 03F35, 03D30, Secondary 05D10.}
\keywords{Ramsey-type theorem, Ramsey's theorem, Hindman's theorem, Computability theory, Reverse mathematics.}
\begin{document}

\begin{abstract}
We consider the restriction of Ramsey's theorem that arises from considering only translation-invariant colourings of pairs, and show that this has the same strength (both from the viewpoint of Reverse Mathematics and from the viewpoint of Computability Theory) as the {\em Adjacent Hindman's Theorem}, proposed by L. Carlucci (Arch. Math. Log. {\bf 57} (2018), 381--359). We also investigate some higher dimensional versions of both of these statements.
\end{abstract}

\maketitle

\section{Introduction}

In this paper, we consider some Ramsey-theoretic combinatorial results from the perspective of both Computability Theory and Reverse Mathematics. Ramsey's theorem in dimension $n$, denoted $\rt^n$ ($n\geq 1$), is the statement that for every $c:[\mathbb N]^n\longrightarrow k$ (where $k$, an arbitrary finite number, is used to denote the set $\{0,\ldots,k-1\}$), there exists an infinite set $X\subseteq\mathbb N$ such that $[X]^n$ is $c$-monochromatic. On the other hand, Hindman's theorem, denoted $\hind$, is the statement that for every $c:\mathbb N\longrightarrow k$ there exists an infinite set $X\subseteq\mathbb N$ such that the set
\begin{equation*}
\fs(X)=\left\{\sum_{x\in F}x\bigg|F\subseteq X\text{ is finite and nonempty}\right\}
\end{equation*}
is $c$-monochromatic.

These two principles have been extensively studied, and constitute an important vein of contemporary research in the interface between combinatorics and (various branches of) logic. In order to mention some previous results, we proceed to briefly describe the systems involved. $\rca$ is a subsystem of Second-Order Arithmetic that, roughly speaking, has Peano Arithmetic as its first-order part, and includes closure under relative computability as its second-order part. $\wkl$ is $\rca$ plus closure under taking an infinite branch of each binary tree (i.e. the statement of K\"onig's Lemma, restricted to binary trees, holds), and $\aca$ is $\rca$ plus closure under taking Turing jumps. It is well-known that $\aca$ is strictly stronger than $\wkl$, which in turn is strictly stronger than $\rca$. The reader might consult~\cite{simpson-subsystems} for a thorough introduction to this area of research, or~\cite{hirschfeldt} for a treatment more readily oriented to the three systems that we just mentioned and their relationship to Ramsey theory in general. Nowadays, it is known that, if $n\geq 3$, then $\rt^n$ is equivalent to $\aca$ over $\rca$; somewhat surprisingly, $\rt^2$ is a principle at the same time strictly weaker than $\aca$ and incomparable with than $\wkl$, whereas $\rt^1$ is simply the pigeonhole principle (strictly stronger than plain $\rca$ but much weaker than $\wkl$). These results are classical, although the interested reader can consult, e.g.,~\cite{dzhafarov-mummert-textbook} (especially Corollaries 8.2.6 and 8.6.2) for a more contemporary source. On the other hand, it is also known that, again over $\rca$, $\acaplus$ implies $\hind$ which in turn implies $\aca$, where $\acaplus$ is essentially $\aca$ plus closure under the $\omega$-th Turing jump; these last results are due to Blass, Hirst and Simpson~\cite{blass-hirst-simpson}. The precise strength of $\hind$ in the hierarchy of reverse-mathematical principles is still unknown, and determining this strength is currently one of the important open problems in Reverse Mathematics. A good overview of the state of the art regarding this problem can be found in~\cite{carlucci-overview}.

With the finer distinctions provided by Computability Theory, especially the theory of Turing reducibility, one can be more precise about the results just mentioned. For example, for Ramsey's theorem, it is known~\cite{jockusch} that for every {\it instance} $c$ of the problem (that is, every colouring) there exists a solution (a set that is monochromatic for the colouring in question) $X$ such that $X\leq_T c^{(2n+2)}$, the $(2n+2)$-fold Turing jump of $c$; on the other hand, there is a computable instance $c$ of Ramsey's theorem such that every solution $X$ satisfies $\varnothing^{(n-2)}\leq_T X$. In the case of Hindman's theorem, it is known~\cite{blass-hirst-simpson} that there are computable instances $c$ to the theorem such that every monochromatic solution $X$ satisfies $\varnothing'\leq_T X$ whereas every instance $c$ of the theorem admits a solution $X$ such that $X\leq_T c^{(\omega+1)}$.

In this paper, we consider certain restrictions of Hindman's theorem and of Ramsey's theorem. On the front of Hindman's theorem, we shall work with L. Carlucci's {\it Adjacent Hindman's Theorem} proposed in~\cite{carlucci-adjacent}, which results from restricting the monochromaticity requirement not to a full set $\fs(X)$ but rather to the restricted set $\afs(X)$ of {\it adjacent} finite sums, meaning those sums of finitely many {\it consecutive} elements of $X$ when ordered increasingly, leaving no gaps. Carlucci showed that, over $\rca$, $\rt^2$ implies $\aht$, even with an extra condition that we will also consider, called {\it apartness}. On the other hand, we will consider a restriction of Ramsey's theorem that arises from restricting the allowable problems for which we require a solution ---concretely, stating the theorem only for those colourings that are invariant under the translation action of $\mathbb Z$, obtaining what we will call the {\it $\mathbb Z$-Ramsey's theorem}. Although this principle has not been previously studied within Reverse Mathematics, the idea of restricting instances of Ramsey's theorem only to translation-invariant colourings has already been used, e.g., by Petrenko and Protasov in their definition of $\mathbb Z$-Ramsey ultrafilters~\cite{petrenko-protasov}.

Many of the principles mentioned in the previous paragraph turn out to be equivalent to $\aca$ over $\rca$, so for the most part the viewpoint of Reverse Mathematics will not be very informative in that respect. In view of this, we focus more on the Computability-theoretic aspect of the principles, thinking instead about the {\it reducibility} relation, by means of which an instance of one problem can be turned into an instance of a different problem and a solution to the latter gets turned back into a solution to the original instance. More concretely, we will work with {\it Weihrauch reducibility}\footnote{Some authors (e.g.,~\cite[p. 22]{hirschfeldt}) have called this notion {\it uniform reducibility}, but the name ``Weihrauch reducibility'' seems to be the most commonly used nowadays. We are grateful to two anonymous referees for this observation.}. Recall~\cite[Definition 4.3.1]{dzhafarov-mummert-textbook} that a problem $P$ is {\bf Weihrauch reducible} to another problem $Q$, denoted $P\weileq Q$, if there are Turing functionals $\Phi,\Psi$ such that for every instance $X$ of $P$, $\Phi^X$ is an instance of $Q$, and for every solution $Y$ to $\Phi^X$, $\Psi^{X\oplus Y}$ is a solution to $X$. If we have the exact same situation, but with a Turing functional $\Psi$ such that $\Psi^Y$ is a solution to $X$ (i.e. we may compute the solution to $X$ just with information from $Y$ without needing to use any information about $X$), then we say that $P$ is {\bf strongly Weihrauch reducible}~\cite[Definition 4.4.1]{dzhafarov-mummert-textbook} to $Q$ and write $P\weisleq Q$. If each of the problems $P,Q$ is Weihrauch reducible (respectively, strongly Weihrauch reducible) to the other, then we say that $P$ is {\bf Weihrauch equivalent} (respectively, {\bf strongly Weihrauch equivalent}) to $Q$ and we denote this by $P\weieq Q$ (respectively, $P\weiseq Q$). So we will use this notion of reducibility to gauge the position of the Adjacent Hindman's Theorem and the $\mathbb Z$-Ramsey's Theorem within the computability-theoretic hierarchy; an important feature of all of our proofs is that they can be formalized within $\rca$, thus yielding corresponding (but coarser) Reverse Mathematics results as corollaries.

\section*{Acknowledgements}

We thank Lorenzo Carlucci for useful discussions and suggestions (including the realization that our proofs yielded not only Weihrauch reductions, but strong Weihrauch reductions), as well as two anonymous referees whose thorough and abundant comments helped greatly improve this paper. The second author was partially supported by IPN's internal grants SIP-20221862, SIP-20240886, and SIP-20253559, as well as by Secihti's grant CBF2023-2024-334. The first and fourth authors were supported by scholarships from Secihti.

\section{The $\mathbb Z$-invariant Ramsey's theorem}

\begin{definition}\hfill
\begin{enumerate}
\item Given an $n\geq 1$, we will say that a colouring of $n$-subsets $c:[\mathbb N]^n\longrightarrow k$ is {\bf $\mathbb Z$-invariant} if, for every $n$-subset $\{x_1,\ldots,x_n\}\in[\mathbb N]^n$ and for every $z\in\mathbb N$, it is the case that $c(\{x_1,\ldots,x_n\})=c(\{x_1+z,\ldots,x_n+z\})$.
\item The {\bf $\mathbb Z$-Ramsey's theorem for $n$-subsets}, which we will denote by $\zrt^n$, is the statement that every $\mathbb Z$-invariant colouring of $n$-subsets admits an infinite monochromatic set.
\end{enumerate}
\end{definition}

It is clear that, for each $n\geq 1$, $\zrt^n$ is a trivial corollary of $\rt^n$. Now, in the case of $\rt^n$, there is no difference (whether from the viewpoint of Weihrauch equivalence, or over $\rca$) whether one formulates it over $\mathbb N$ or over any infinite $X\subseteq\mathbb N$~\cite[Proposition 8.3.4]{dzhafarov-mummert-textbook}. This is not the case for $\zrt^n$: for example, if $X\subseteq\mathbb N$ is any infinite set with the property that (if we enumerate its elements increasingly by $x_n$) $x_{n+1}-x_n>x_n-x_0$ for all $n$, then (since no two distinct pairs of elements of $X$ are at the same distance from one another) the family of $\mathbb Z$-invariant colourings restricted to $X$ is the same as the family of all arbitrary colourings on $X$. Therefore, for such a set, the version of $\zrt^n$ stated for $X$ would be Weihrauch-equivalent to the full $\rt^n$. We therefore refrain from considering ``general'' versions of the $\mathbb Z$-Ramsey theorem.

The following theorem basically establishes that $\zrt^{n+1}$ implies $\rt^n$ over $\rca$, although, as we warned in the introduction, we provide more precise information by phrasing our theorem in the language of Weihrauch reducibility.

\begin{theorem}\label{thm2}
For each $n\geq 1$, $\rt^n\weisleq\zrt^{n+1}$.
\end{theorem}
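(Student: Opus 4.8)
The plan is to exhibit explicit Turing functionals $\Phi$ and $\Psi$ witnessing the strong reduction. First I would turn an arbitrary colouring $c\colon[\mathbb N]^n\longrightarrow k$ into a $\mathbb Z$-invariant colouring $\hat c\colon[\mathbb N]^{n+1}\longrightarrow k$ by reading each $(n+1)$-subset through its differences from its least element: given $\{y_0<y_1<\cdots<y_n\}$, set
\[
\hat c(\{y_0,\ldots,y_n\})=c(\{y_1-y_0,\,y_2-y_0,\,\ldots,\,y_n-y_0\}).
\]
Because $0<y_1-y_0<y_2-y_0<\cdots<y_n-y_0$, the argument on the right is a genuine $n$-subset, so $\hat c$ is total; and since each difference $y_j-y_0$ is unchanged when the whole tuple is translated by some $z$, the colouring $\hat c$ is $\mathbb Z$-invariant. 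The assignment $c\mapsto\hat c$ is clearly computable (differences are primitive recursive, and one query to the oracle $c$ determines each value), so this is the forward functional $\Phi$, with $\hat c=\Phi^c$.

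Next I would apply $\zrt^{n+1}$ to $\hat c$ to obtain an infinite $\hat c$-monochromatic set $Y=\{y_0<y_1<y_2<\cdots\}$, say of colour $i^\ast$. The recovery functional $\Psi$ then outputs
\[
X=\{\,y_j-y_0 : j\geq 1\,\},
\]
which is an infinite subset of $\mathbb N$ read off directly from the elements of $Y$; crucially, computing $X$ requires only $Y$ and not $c$, which is exactly what distinguishes a strong Weihrauch reduction from an ordinary one.

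The crux is verifying that $X$ is $c$-monochromatic, and the key observation is that prepending $y_0$ reconstitutes an $(n+1)$-subset of $Y$ out of any $n$-subset of $X$. Concretely, given $\{y_{j_1}-y_0<\cdots<y_{j_n}-y_0\}\in[X]^n$ with $1\leq j_1<\cdots<j_n$, the set $\{y_0<y_{j_1}<\cdots<y_{j_n}\}$ lies in $[Y]^{n+1}$, and by the very definition of $\hat c$ its colour is $c(\{y_{j_1}-y_0,\ldots,y_{j_n}-y_0\})$. Since $Y$ is $\hat c$-monochromatic this colour equals $i^\ast$; hence every $n$-subset of $X$ receives colour $i^\ast$ under $c$, so $X$ is a solution to the original instance.

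I do not expect a genuine obstacle here: the reduction is direct, and the only point requiring care is the bookkeeping ensuring that $\Psi$ is independent of $c$, which is automatic since $X$ is extracted verbatim from $Y$. Finally, each step above is manifestly formalizable in $\rca$, so the same argument yields the coarser implication that $\zrt^{n+1}$ proves $\rt^n$ over $\rca$ as a corollary.
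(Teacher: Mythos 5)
Your proposal is correct and follows essentially the same route as the paper: the forward functional reads each $(n+1)$-set through its differences from its minimum, and the backward functional subtracts the minimum of the monochromatic set, with the same verification that prepending the minimum recovers an $(n+1)$-subset. No gaps; the observation that $\Psi$ depends only on $Y$ (giving the strong reduction) matches the paper's construction exactly.
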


\begin{proof}
The witnessing Turing functionals, which we denote by $\Phi$ and $\Psi$, are as follows: for a colouring $c:[\mathbb N]^n\longrightarrow 2$ we let $\Phi^c:[\mathbb N]^{n+1}\longrightarrow 2$ be given by
\begin{equation*}
\Phi^c(x_0,x_1,\ldots,x_n)=c(x_1-x_0,x_2-x_0,\ldots,x_n-x_0).
\end{equation*}
It is easily verified that $\Phi^c$ is $\mathbb Z$-invariant. In the other direction, given an infinite set $A\subseteq\mathbb N$, we let
\begin{equation*}
\Psi^A=\{x-x_0\big|x\in A\setminus\{x_0\}\},
\end{equation*}
where $x_0=\min(A)$. If $[A]^{n+1}$ is $\Phi^c$-monochromatic (say, in colour $i$) then $[\Psi^A]^n$ is $c$-monochromatic (in the same colour $i$), since for any elements $x_1-x_0,\ldots,x_n-x_0\in \Psi^A$ we have
\begin{equation*}
c(x_1-x_0,\ldots,x_n-x_0)=\Phi^c(x_0,x_1,\ldots,x_n)=i.
\end{equation*}
\end{proof}

The attentive reader should note that, in the previous proof, in order for $[\Psi^A]^n$ to be $c$-monochromatic, we do not fully use the fact that all $(n+1)$-subsets of $A$ are $\Psi^c$-monochromatic, but only those whose first element is $x_0$---for example, for $n=1$, we only need the $2$-subsets taking their first element from $\{x_0\}$ and their second element from $\{x_n\big|n\geq 1\}$, which looks like a very particular case of a Polarized version of Ramsey's Theorem. This suggests an intriguing question for possible further research, namely whether some restriction of the Increasing Polarized Ramsey's Theorem, possibly in a higher dimension, can already imply Ramsey's Theorem---and what would be the right restriction to make in order to obtain an equivalence.

The proof of Theorem~\ref{thm2} is simple enough that, by carefully going through it from the perspective of Reverse Mathematics, it is apparent that no extra induction beyond $\isigma$ is used. Hence, this proof also yields implications over $\rca$, which we will summarize in the following corollary. Since the statements $\rt^n$ are equivalent for $n\geq 3$ (and they are equivalent to $\aca$), the only interesting corollary, from the perspective of Reverse Mathematics, is the part where $n\leq 3$.

\begin{corollary}\label{cor:implicaciones-reverse}
\hfill
\begin{enumerate}
\item For each $n\geq 4$, $\rca\vdash\zrt^n\leftrightarrow\aca$,
\item $\aca\vdash\zrt^3$,
\item $\rca\vdash\zrt^3\rightarrow\rt^2\rightarrow\zrt^2\rightarrow\rt^1$.
\end{enumerate}
\end{corollary}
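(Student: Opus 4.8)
The plan is to assemble the corollary from three ingredients that are already at hand: first, the strong Weihrauch reduction $\rt^n\weisleq\zrt^{n+1}$ of Theorem~\ref{thm2}, together with the remark following its proof that this reduction formalizes in $\rca$ using no induction beyond $\isigma$, so that it yields the implication $\zrt^{n+1}\to\rt^n$ over $\rca$; second, the trivial implication $\rt^n\to\zrt^n$ (every $\mathbb Z$-invariant colouring is in particular a colouring, so any monochromatic set furnished by $\rt^n$ already witnesses $\zrt^n$), which is provable in $\rca$ by mere instantiation; and third, the classical equivalence $\rt^m\leftrightarrow\aca$ over $\rca$ for all $m\geq 3$, recalled in the introduction. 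Everything then reduces to chaining these implications in the right order.

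For part (3) I would simply concatenate. Instantiating Theorem~\ref{thm2} at $n=2$ gives $\zrt^3\to\rt^2$; the trivial implication gives $\rt^2\to\zrt^2$; and instantiating Theorem~\ref{thm2} at $n=1$ gives $\zrt^2\to\rt^1$. Stringing these three together produces the displayed chain $\zrt^3\rightarrow\rt^2\rightarrow\zrt^2\rightarrow\rt^1$.

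For part (1) I would fix $n\geq 4$. The forward direction $\zrt^n\to\aca$ follows by instantiating Theorem~\ref{thm2} at $n-1\geq 3$ to obtain $\zrt^n\to\rt^{n-1}$, and then invoking $\rt^{n-1}\leftrightarrow\aca$ (licit since $n-1\geq 3$). The reverse direction $\aca\to\zrt^n$ follows from $\aca\leftrightarrow\rt^n$ (licit since $n\geq 3$) composed with the trivial implication $\rt^n\to\zrt^n$. Part (2) is precisely the reverse half of this argument carried out one dimension lower: from $\aca\leftrightarrow\rt^3$ and $\rt^3\to\zrt^3$ we obtain $\aca\vdash\zrt^3$.

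The work here is essentially bookkeeping, so I expect no genuine obstacle; the one point deserving care is the asymmetry in part (2). Theorem~\ref{thm2} bounds $\zrt^3$ from below only by $\rt^2$, which is strictly weaker than $\aca$, so the strategy that closes the loop for $n\geq 4$ cannot be made to produce a reverse implication $\zrt^3\to\aca$. This is exactly why part (2) is recorded merely as an upper bound rather than an equivalence, and why these elementary considerations leave the precise strength of $\zrt^3$ (somewhere between $\rt^2$ and $\aca$) undetermined.
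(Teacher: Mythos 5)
Your proposal is correct and is precisely the argument the paper intends: the corollary is obtained by formalizing the reduction of Theorem~\ref{thm2} in $\rca$ (the paper notes no induction beyond $\isigma$ is needed), combining it with the trivial implication $\rt^n\rightarrow\zrt^n$ and the classical equivalence of $\rt^m$ with $\aca$ for $m\geq 3$. Your closing remark about why part (2) is only an upper bound also matches the paper's own discussion of the unresolved arrow from $\zrt^3$ to $\rt^2$.
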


{\it A priori}, the arrows on part (3) of Corollary~\ref{cor:implicaciones-reverse} may or may not be reversible. Later on, in the last section (and once we have more tools under our belt), we will look into this specific question regarding the arrow from $\zrt^3$ to $\rt^2$.

\subsection{Relation with the Adjacent Hindman's Theorem}

Let us begin by recalling Carlucci's adjacent Hindman's theorem.

\begin{definition}
The {\bf adjacent Hindman's theorem}, denoted by $\aht$, is the statement that, for every finite colouring $c:\mathbb N\longrightarrow k$, there exists an infinite set $X$ such that the set
\begin{equation*}
\afs(X)=\{x_n+x_{n+1}+\cdots+x_{n+l}\big|n,l\in\mathbb N\}
\end{equation*}
is monochromatic, where the sequence $\langle x_n\big|n\in\mathbb N\rangle$ represents the (unique) increasing enumeration of the set $X$.
\end{definition}

Given that the set $\afs(X)$ is defined in such a way that the order within our set matters (unlike the usual $\fs$ sets utilized in Hindman's theorem), one could conceivably state a version of the $\aht$ for {\it sequences} rather than sets, i.e. affirming the existence of a sequence $\langle x_n\big|n\in\mathbb N\rangle$, not necessarily increasing (or even injective!) whose adjacent finite sums are monochromatic. However, it is readily seen that these two versions of the $\aht$ must be equivalent (whether from the viewpoint of Reverse Mathematics over $\rca$, or from the viewpoint of Computability Theory). Clearly the original version as stated by Carlucci implies the ``sequence'' version (by replacing the set $X$ with the sequence that increasingly enumerates its elements); conversely, given an arbitrary sequence $x_n$, one can recursively define $y_1=x_1$, $k_1=1$; and $y_{n+1}=x_{k_n+1}+x_{k_n+2}+\cdots+x_{k_{n+1}}$, with $k_{n+1}$ the least number making the $y_{n+1}$ thus defined to be strictly larger than $y_n$. This way we obtain a strictly increasing sequence $\langle y_n\big|n\in\mathbb N\rangle$ (in particular, we obtain a set $Y$ whose increasing enumeration is precisely the sequence of $y_n$) such that $\afs(Y)=\afs(y_n\big|n\in\mathbb N)\subseteq\afs(x_n\big|n\in\mathbb N)$.

The following theorem is the observation that originally launched the work in this paper.

\begin{theorem}\label{zrtversusaht}
$\zrt^2\weiseq\aht$.
\end{theorem}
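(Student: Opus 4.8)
The plan is to exploit an elementary duality between differences and adjacent finite sums. On one hand, a $\mathbb Z$-invariant colouring of pairs is nothing but a colouring of the \emph{differences} $y-x$ (for $x<y$), since $\mathbb Z$-invariance forces the colour of $\{x,y\}$ to depend only on $y-x$. On the other hand, for a strictly increasing set with increasing enumeration $\langle a_n\rangle$, the telescoping identity $(a_{n+1}-a_n)+\cdots+(a_{n+l+1}-a_{n+l})=a_{n+l+1}-a_n$ turns the adjacent finite sums of the \emph{gap sequence} $\langle a_{n+1}-a_n\rangle$ into differences of the $a_n$; dually, the differences of the sequence of \emph{partial sums} of $\langle x_n\rangle$ are exactly the adjacent finite sums of $\langle x_n\rangle$. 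Each direction of the equivalence will realize one half of this correspondence, and in each case I would arrange the backward functional to read the returned solution alone, so that the reductions are strong.

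For $\aht\weisleq\zrt^2$: given a colouring $c:\mathbb N\to k$, I would let $\Phi^c:[\mathbb N]^2\to k$ send a pair $\{x,y\}$ with $x<y$ to $c(y-x)$; this is $\mathbb Z$-invariant, hence a legitimate instance of $\zrt^2$. Given an infinite $\Phi^c$-monochromatic set $A$ (in colour $i$) with increasing enumeration $\langle a_n\rangle$, I would form the gap sequence $g_n=a_{n+1}-a_n$; by the telescoping identity each adjacent sum $g_n+\cdots+g_{n+l}=a_{n+l+1}-a_n$ is a difference of elements of $A$ and so receives colour $i$ under $c$, whence $\afs(\langle g_n\rangle)$ is monochromatic. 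Since $\aht$ asks for a \emph{set}, $\Psi^A$ would group the $g_n$ into consecutive blocks exactly as in the passage from the sequence version of $\aht$ to the set version (each block chosen so its sum exceeds the previous one, which is always possible because every $g_n\geq 1$), yielding a strictly increasing set $Y$ with $\afs(Y)\subseteq\afs(\langle g_n\rangle)$, still monochromatic. This is all computed from $A$ alone.

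For $\zrt^2\weisleq\aht$: given a $\mathbb Z$-invariant $c:[\mathbb N]^2\to k$, I would let $\Phi^c:\mathbb N\to k$ be $\Phi^c(m)=c(\{0,m\})$ for $m\geq 1$ (the value at $0$ being irrelevant). Given an infinite set $X$ with increasing enumeration $\langle x_n\rangle$ whose set $\afs(X)$ is monochromatic in colour $i$, I would let $\Psi^X$ be the set of partial sums $b_m=x_0+\cdots+x_m$ ($m\geq 0$), which is strictly increasing since $x_n\geq 1$ for $n\geq 1$. For $n<m$ we have $b_m-b_n=x_{n+1}+\cdots+x_m\in\afs(X)$, so $\mathbb Z$-invariance (translating $\{0,b_m-b_n\}$ upward by $b_n$) gives $c(\{b_n,b_m\})=c(\{0,b_m-b_n\})=\Phi^c(b_m-b_n)=i$; thus $\Psi^X$ is an infinite $c$-monochromatic set, and again $\Psi^X$ depends only on $X$.

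The conceptual heart of the argument is simply spotting this duality; once it is in hand the verifications are short. I expect the only genuine technical friction to lie in the first direction, where the gap sequence need not be increasing (indeed need not be injective), so one must pass through the sequence-to-set conversion in order to return a bona fide solution of the \emph{set} formulation of $\aht$, and must check that the blocking procedure never stalls — which is exactly what $g_n\geq 1$ guarantees. The second recurring point to verify, throughout both directions, is that each backward functional consults only the returned solution and never the original colouring; this is what upgrades the reductions from $\weileq$ to $\weisleq$ and hence yields the strong Weihrauch equivalence $\zrt^2\weiseq\aht$.
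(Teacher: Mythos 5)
Your proposal is correct and follows essentially the same route as the paper: the same forward functionals ($\Phi^c(\{x,y\})=c(y-x)$ in one direction, $\Phi^c(m)=c(\{0,m\})$ in the other), partial sums for one backward functional and gap sequences plus telescoping for the other, with both backward functionals reading only the returned solution. The only cosmetic difference is in how the gap sequence is turned into a legitimate set-solution of $\aht$: you block consecutive gaps of the monochromatic set until the block sums increase, whereas the paper first thins the monochromatic set to a subset with strictly increasing gaps and then takes its gap set; both devices work for the same reason.
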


\begin{proof}
For the first reducibility relation, we describe the corresponding Turing functionals $\Phi_1,\Psi_1$ as follows.
Given a $\mathbb Z$-invariant colouring $c:[\mathbb N]^2\longrightarrow k$, $\Phi_1^c:\mathbb N\longrightarrow k$ is defined by $\Phi_1^c(y)=c(0,y)$. Note that the assumption that $c$ is $\mathbb Z$-invariant implies $\Phi_1^c(y)=c(x,x+y)$ for every natural number $x$. On the other hand, for any infinite set $Y=\{y_n\big|n\in\mathbb N\}$ (where the indexing $y_n$ of the elements of $Y$ constitutes an increasing enumeration), we let $\Psi_1^Y=\{y_1+\cdots+y_n\big|n\in\mathbb N\}$. For every pair of elements of $\Psi_1^Y$, we have
\begin{eqnarray*}
c(y_1+\cdots+y_n,y_1+\cdots+y_m) & = & c(y_1+\cdots+y_n,(y_1+\cdots+y_n)+(y_{n+1}+\cdots+y_m)) \\
 & = & c(0,y_{n+1}+\cdots+y_m) \\
 & = & \Phi_1^c(y_{n+1}+\cdots+y_m).
\end{eqnarray*}
Since numbers of the form $y_{n+1}+\cdots+y_m$ are precisely the elements of $\afs(Y)$, the conclusion is that $\afs(Y)$ is $\Phi_1^c$-monochromatic if and only if $[\Psi_1^Y]^2$ is $c$-monochromatic.

For the converse reducibility relation, we denote the relevant functionals by $\Phi_2,\Psi_2$. Given a $d:\mathbb N\longrightarrow k$ we let $\Phi_2^d:[\mathbb N]^2\longrightarrow k$ be given by $\Phi_2^d(\{x,y\})=d(y-x)$, whenever $x<y$; it is readily seen that $\Phi_2^d$ is a $\mathbb Z$-invariant colouring. On the other hand, for an infinite set $X$, first define recursively $X'=\{x_n\big|n\in\mathbb N\}$ by letting $x_0,x_1$ be the two smallest elements of $X$, and then we let each $x_{n+1}$ be the least element of $X$ so that $x_{n+1}-x_n>x_n-x_{n-1}$. We then let $\Psi_2^X=\{x_{n+1}-x_n\big|n\in\mathbb N\}$. Note that an element of $\afs(\Psi_2^X)$ is of the form
\begin{equation*}
(x_{n+1}-x_n)+(x_{n+2}-x_{n+1})+\cdots+(x_{n+k+1}-x_{n+k})=x_{n+k+1}-x_n,
\end{equation*}
therefore if $x_{n+k+1}-x_n=y\in\afs(\Psi_2^X)$ then $d(y)=d(x_{n+k+1}-x_n)=\Phi_2^d(x_n,x_{n+k+1})$; so, if $X$ is $\Phi_2^d$-monochromatic then $\afs(\Psi_2^X)$ is $d$-monochromatic.
\end{proof}

Note that the previous proof, in a sense, shows that $\mathbb Z$-invariant colourings are precisely those colourings that depend only on the distance between the two elements of the pair being coloured. Once again, the proof is simple enough that one can directly see that no induction beyond $\isigma$ is used; therefore, the same proof (appropriately formalized) yields as a result the following corollary.

\begin{corollary}
$\rca\vdash\aht\leftrightarrow\zrt^2$.
\end{corollary}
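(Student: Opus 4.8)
The plan is to extract this corollary directly from the proof of Theorem~\ref{zrtversusaht} by checking that each of the four functionals $\Phi_1,\Psi_1,\Phi_2,\Psi_2$ constructed there, together with the accompanying verifications, can be carried out inside $\rca$. A strong Weihrauch reduction becomes an outright implication once one confirms that every object it produces is a legitimate object of $\rca$ and that the reasoning uses no comprehension or induction beyond what $\rca$ affords; so I would establish the two implications separately, reusing verbatim the computations already displayed.

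For the direction $\aht\to\zrt^2$, I would assume $\aht$ and let $c:[\mathbb N]^2\longrightarrow k$ be an arbitrary $\mathbb Z$-invariant colouring. The colouring $\Phi_1^c(y)=c(0,y)$ exists by $\Delta^0_1$-comprehension, so $\aht$ supplies an infinite $Y=\{y_n\mid n\in\mathbb N\}$ with $\afs(Y)$ monochromatic; the set $\Psi_1^Y=\{y_1+\cdots+y_n\mid n\in\mathbb N\}$ of partial sums is again $\Delta^0_1$ in $Y$, and is infinite because its increasing enumeration is strictly increasing (at most one element of $Y$ can equal $0$). The computation displayed in the proof of Theorem~\ref{zrtversusaht}, which invokes only the $\mathbb Z$-invariance of $c$ and elementary arithmetic, then shows inside $\rca$ that $[\Psi_1^Y]^2$ is $c$-monochromatic.

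For the converse direction $\zrt^2\to\aht$, I would assume $\zrt^2$ and, given $d:\mathbb N\longrightarrow k$, form the $\mathbb Z$-invariant colouring $\Phi_2^d(\{x,y\})=d(y-x)$ for $x<y$, which again exists by $\Delta^0_1$-comprehension, and take the infinite monochromatic $X$ granted by $\zrt^2$. The delicate point is the recursive definition of the thinned-out subsequence $X'=\{x_n\mid n\in\mathbb N\}$, where $x_{n+1}$ is the least element of $X$ with $x_{n+1}-x_n>x_n-x_{n-1}$. Here I would argue that, since every infinite set is unbounded, at each stage an element of $X$ producing a strictly larger gap provably exists, so that the step function (which searches $X$ for its least element beyond a computable threshold) is total; the existence of the whole sequence $\langle x_n\mid n\in\mathbb N\rangle$ as a set then follows by primitive recursion, whose justification needs only $\isigma$. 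Once $X'$ is in hand, $\Psi_2^X=\{x_{n+1}-x_n\mid n\in\mathbb N\}$ is $\Delta^0_1$ in $X'$ and infinite (its defining gaps are strictly increasing), and the telescoping identity together with the monochromaticity of $X$ shows, by elementary arithmetic, that $\afs(\Psi_2^X)$ is $d$-monochromatic.

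The main obstacle, such as it is, lies entirely in this last construction: I would take care to confirm that the recursion producing $X'$ halts at every stage and yields a genuine $\rca$-object, and in particular that the induction certifying both totality of the step function and existence of the sequence does not exceed $\isigma$. Everything else---the comprehension producing the two colourings and the two solution sets, together with the verifications of monochromaticity---is $\Delta^0_1$ and arithmetically routine, so no strengthening of $\rca$ is needed and the biconditional $\rca\vdash\aht\leftrightarrow\zrt^2$ follows.
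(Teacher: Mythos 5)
Your proposal is correct and follows exactly the route the paper intends: the corollary is obtained by observing that the strong Weihrauch reductions of Theorem~\ref{zrtversusaht} formalize in $\rca$, with the only point needing care being the $\Delta^0_1$ recursion producing the thinned-out sequence $X'$, which you correctly justify from the unboundedness of $X$ using only $\isigma$. The paper leaves these verifications implicit; you have simply written them out.
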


\section{Adjacent Hindman's theorem in higher dimensions.}

We now proceed to study a higher-dimensional version of the Adjacent Hindman's Theorem, much in the same spirit that the Milliken--Taylor theorem constitutes a higher-dimensional version of the usual Hindman's theorem. Recall first that Hindman's theorem has an equivalent formulation in terms of finite unions: for every colouring of the set of finite subsets of $\mathbb N$, there exists an infinite, pairwise disjoint family of such finite subsets satisfying that all of the unions of finitely many of those sets receive the same colour. The Milliken--Taylor theorem~\cite{milliken-thm,taylor-thm}, for example for dimension $2$, ensures that, given any colouring of pairs of finite subsets of $\mathbb N$, there exists an infinite pairwise disjoint family such that all {\it ordered} pairs of finite unions have the same colour\footnote{A pair of finite sets $(a,b)$ is ordered if $\max(a)<\min(b)$. We switched from the finite-sums to the finite-unions formulation here in order to be able to use this notion; the Milliken--Taylor theorem can also be formulated in terms of finite-sums but then one must introduce the $\lambda$ and $\mu$ functions, which we will only do in the next subsection.}; and analogously for higher-dimensional versions. In the adjacent context, we will have to consider {\it adjacent} pairs of adjacent finite unions, and similarly for higher dimensions.

\begin{definition}\hfill
\begin{enumerate}
\item Let $\vec{x}=\langle x_n\big|n\in\mathbb N\rangle$ be a sequence of natural numbers, and let $d\in\mathbb N\setminus\{0\}$. We define the set of {\bf adjacent $d$-tuples of adjacent sums from $\vec{x}$} to be the set
\begin{equation*}
\afs^d(\vec{x})=\left\{\left(\sum_{k=k_0}^{k_1} x_k,\sum_{k=k_1+1}^{k_2} x_k,\ldots,\sum_{k=k_{d-1}+1}^{k_d} x_k\right)\bigg|k_0\leq k_1<k_2<\cdots<k_d\right\}
\end{equation*}
\item We define the {\bf $d$-Adjacent Hindman's Theorem}, denoted $\aht^d$, to be the statement that for every colouring $d:\mathbb N^d\longrightarrow k$ there exists an infinite set $Y\subseteq\mathbb N$ such that, if $\vec{y}=\langle y_n\big|n\in\mathbb N\rangle$ is the increasing enumeration of $Y$, then the set $\afs^d(\vec{y})$ is $d$-monochromatic.
\end{enumerate}
\end{definition}

Just as in the case of the $1$-dimensional adjacent Hindman's theorem, $\aht^d$ for $d>1$  could be phrased directly in terms of sequences, and we would still obtain an equivalent statement. The following is the generalization of Theorem~\ref{zrtversusaht} to this context.

\begin{theorem}\label{higher-zrtversusaht}
For each $d\in\mathbb N\setminus\{0\}$, we have $\zrt^{d+1}\weiseq\aht^d$.
\end{theorem}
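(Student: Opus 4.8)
The plan is to generalize the two reductions of Theorem~\ref{zrtversusaht} to arbitrary dimension $d$, keeping the same guiding principle that $\mathbb Z$-invariant colourings of $(d+1)$-subsets are exactly those colourings that depend only on the $d$ consecutive gaps between the $d+1$ coordinates. Concretely, for the reduction $\aht^d \weisleq \zrt^{d+1}$, given a colouring $e:\mathbb N^d \longrightarrow k$ I would define $\Phi^e:[\mathbb N]^{d+1}\longrightarrow k$ by sending an increasing tuple $(x_0,x_1,\ldots,x_d)$ to $e(x_1-x_0,\,x_2-x_1,\,\ldots,\,x_d-x_{d-1})$, i.e. the colour is read off from the successive differences. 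This is manifestly $\mathbb Z$-invariant, since the differences are translation-invariant. In the other direction, from an infinite $\Phi^e$-monochromatic set $A$ I would recover a solution by taking the gap sequence $\Psi^A=\langle a_{n+1}-a_n \mid n\in\mathbb N\rangle$ of the increasing enumeration of $A$. The key computation, exactly as in the one-dimensional case, is that a generic element of $\afs^d(\Psi^A)$ is a $d$-tuple of consecutive blocks of gaps, and each such block telescopes to a single difference $a_{k_{j}}-a_{k_{j-1}}$; so the $d$-tuple of adjacent sums equals $(a_{k_1}-a_{k_0},\,a_{k_2}-a_{k_1},\,\ldots,\,a_{k_d}-a_{k_{d-1}})$, which is precisely $\Phi^e(a_{k_0},a_{k_1},\ldots,a_{k_d})$, hence receives the monochromatic colour.

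For the converse reduction $\zrt^{d+1}\weisleq\aht^d$, I would invert the same dictionary. Given a $\mathbb Z$-invariant $c:[\mathbb N]^{d+1}\longrightarrow k$, define $\Phi^c:\mathbb N^d\longrightarrow k$ on a tuple $(g_1,\ldots,g_d)$ of positive gaps by building the increasing tuple with those gaps starting from $0$, namely setting $s_0=0$ and $s_j=g_1+\cdots+g_j$, and letting $\Phi^c(g_1,\ldots,g_d)=c(s_0,s_1,\ldots,s_d)$. By $\mathbb Z$-invariance this reads off the colour of \emph{any} increasing $(d+1)$-tuple with those prescribed consecutive gaps. Then, from an infinite set $Y$ whose increasing enumeration $\vec{y}$ has $\afs^d(\vec{y})$ monochromatic, I would output the partial-sum set $\Psi^Y=\{y_1+\cdots+y_n \mid n\in\mathbb N\}$. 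A $(d+1)$-subset of $\Psi^Y$ has the form $\{S_{n_0}<S_{n_1}<\cdots<S_{n_d}\}$ (writing $S_n=y_1+\cdots+y_n$), and its consecutive gaps are the sums $y_{n_{j-1}+1}+\cdots+y_{n_j}$, so by construction $c$ applied to this subset equals $\Phi^c$ evaluated at the corresponding gap tuple, which is exactly an element of $\afs^d(\vec{y})$ and hence monochromatic. Both directions use only $\Psi$-functionals depending on the solution alone, giving \emph{strong} Weihrauch reductions, and both should formalize within $\isigma$ just as before.

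The step I expect to require the most care is the bookkeeping in the converse direction for $d>1$: one must check that as $(n_0,n_1,\ldots,n_d)$ ranges over all increasing $(d+1)$-tuples of indices, the induced gap tuples $(y_{n_0+1}+\cdots+y_{n_1},\,\ldots,\,y_{n_{d-1}+1}+\cdots+y_{n_d})$ range over \emph{precisely} the elements of $\afs^d(\vec{y})$ (matching the definition's index constraint $k_0\le k_1<\cdots<k_d$, with the block starting at $k_0$ playing the role of the lowest partial sum), and conversely that every $(d+1)$-subset of $\Psi^Y$ arises this way. This combinatorial matching between $(d+1)$-subsets of the partial-sum set and $d$-tuples of adjacent block-sums is the higher-dimensional analogue of the single observation ``$y_{n+1}+\cdots+y_m\in\afs(Y)$'' that carried the $d=1$ argument, but now it involves $d$ simultaneous blocks and must be verified to be a bijection of the relevant index sets so that monochromaticity transfers in both directions. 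Once that correspondence is pinned down, the colour equalities follow immediately from the definitions of $\Phi^c$ and $\mathbb Z$-invariance, and no genuinely new idea beyond the $d=1$ case is needed.
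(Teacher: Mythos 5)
Your proposal is correct and follows essentially the same route as the paper: the same translation-invariant colouring built from consecutive differences in one direction, the same partial-sum set and telescoping computation in the other. The only (minor) divergence is in the reduction $\aht^d\weisleq\zrt^{d+1}$, where the paper first thins the monochromatic set to a subsequence with strictly increasing gaps so that the gap sequence is itself the increasing enumeration of a set, whereas you output the raw gap sequence and implicitly rely on the (computably equivalent) sequence formulation of $\aht^d$; either fix is fine, but one of them is needed to produce a literal solution in the set formulation.
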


\begin{proof}
The Turing functionals $\Phi_1,\Psi_1$ witnessing that $\zrt^{d+1}\weisleq\aht^d$ are: for a $\mathbb Z$-invariant colouring $c:[\mathbb N]^{d+1}\longrightarrow k$, we let $\Phi_1^c:\mathbb N^d\longrightarrow k$ be given by
\begin{equation*}
\Phi_1^c(y_1,\ldots,y_d)=c(0,y_1,y_1+y_2,\ldots,y_1+y_2+\cdots+y_d).
\end{equation*}
On the other hand, given an infinite set $Y=\{y_n\big|n\in\mathbb N\}$, enumerated increasingly, let $\Psi_1^Y=\{y_1+\cdots+y_n\big|n\in\mathbb N\}$. 
Now, if $x_0,\ldots,x_d\in\Psi_1^Y$ are distinct elements, with $x_0<\cdots<x_d$, then we have 
$x_i=y_1+\cdots+y_{k_i}$ for each $i$, with
$k_0<k_2<\cdots<k_d$. Then, by the $\mathbb Z$-invariance of $c$ we have
\begin{eqnarray*}
c(x_0,\ldots,x_d) & = & c(0,x_1-x_0,x_2-x_0,\ldots,x_d-x_0) \\
 & = & c(0,z_1,z_1+z_2,\ldots,z_1+z_2+\cdots+z_d) \\
 & = & \Phi_1^c(z_1,z_2,\ldots,z_d),
\end{eqnarray*}
where we have defined $z_i=x_i-x_{i-1}=y_{k_{i-1}+1}+\cdots+y_{k_i}$. So the values of $c$ on $(d+1)$-subsets from $\Psi_1^Y$ are exactly the values of $\Phi_1^c$ on $d$-tuples from $Y$ that have the form
\begin{equation*}
(y_{k_0+1}+\cdots+y_{k_1},y_{k_1+1}+\cdots+y_{k_2},\cdots,y_{k_{d-1}+1}+\cdots+y_{k_d}),
\end{equation*}
which are precisely the elements of $\afs^d(Y)$. Hence $\Psi_1^Y$ is $c$-monochromatic if and only if $Y$ is $\Phi_1^c$-monochromatic.

Now, to prove $\aht^d\weisleq\zrt^{d+1}$, the witnessing Turing functionals will be $\Phi_2,\Psi_2$ defined as follows: for $d:\mathbb N^d\longrightarrow k$ we let $\Phi_2^d:[\mathbb N]^{d+1}\longrightarrow k$ be given by $\Phi_2^d(x_0,\ldots,x_d)=d(x_1-x_0,x_2-x_1,\ldots,x_d-x_{d-1})$ (if $x_1<\ldots<x_{d+1}$); it is easy to see that $\Phi_2^d$ is a $\mathbb Z$-invariant colouring. For the definition of $\Psi_2$ we proceed as in the proof of theorem~\ref{zrtversusaht}: given an infinite set $X$, first define $X'=\{x_n\big|n\in\mathbb N\}$ such that $x_0,x_1$ are the two smallest elements of $X$, and $x_{n+1}$ is the least element of $X$ so that $x_{n+1}-x_n>x_n-x_{n-1}$ for each $n\geq 1$. We then let $\Psi_2^X=\{x_{n+1}-x_n\big|n\in\mathbb N\}$. Note that, if $y_j=x_{j+1}-x_j$, then an element $\vec{y}\in\afs^d(\Psi_2^X)$ is of the form
\begin{eqnarray*}
\vec{y} & = & (y_{i_0}+\cdots+y_{i_{1}-1},y_{i_1}+\cdots+y_{i_2-1},\ldots,y_{i_{d-1}}+\cdots+y_{i_d-1}) \\
& = & (x_{i_1}-x_{i_0},x_{i_2}-x_{i_1},\ldots,x_{i_d}-x_{i_{d-1}}), 
\end{eqnarray*}
so that $d(\vec{y})=\Phi_2^d(x_0,\ldots,x_d)$ and hence, if $[X]^{n+1}$ is $\Phi_2^d$-monochromatic this implies that $\afs(\Psi_2^X)$ is $d$-monochromatic.
\end{proof}

Once again, by going carefully through the previous proof, and noticing that (the proof is simple enough that) no induction beyond $\isigma$ is used, one proves the following corollary.

\begin{corollary}
For each $d$, $\rca\vdash\zrt^{d+1}\leftrightarrow\aht^d$.
\end{corollary}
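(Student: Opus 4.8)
The plan is to read the corollary off the proof of Theorem~\ref{higher-zrtversusaht}: a strong Weihrauch reduction is witnessed by Turing functionals, and the entire content of the corollary is the observation that those functionals, together with the verifications of correctness, can be carried out inside $\rca$ with no induction beyond $\isigma$. Concretely, I would establish both implications $\aht^d\rightarrow\zrt^{d+1}$ and $\zrt^{d+1}\rightarrow\aht^d$ by reproducing the constructions $\Phi_1,\Psi_1$ and $\Phi_2,\Psi_2$ from that theorem, checking at each stage that (i) the sets being defined exist by the recursive comprehension available in $\rca$, and (ii) the ``if and only if'' monochromaticity arguments invoke only finite-sum identities and a totality-of-recursion fact, both of which are provable from $\isigma$.

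For the direction $\aht^d\rightarrow\zrt^{d+1}$, I would start from a $\mathbb Z$-invariant $c:[\mathbb N]^{d+1}\rightarrow k$ and note that $\Phi_1^c(y_1,\ldots,y_d)=c(0,y_1,\ldots,y_1+\cdots+y_d)$ is given by an explicit arithmetic formula, so $\Phi_1^c$ exists by $\Delta^0_1$-comprehension; applying $\aht^d$ to it yields an infinite $Y$. The partial-sum set $\Psi_1^Y=\{y_1+\cdots+y_n\mid n\in\mathbb N\}$ is the range of a function primitive recursive in $Y$, and it is infinite because the partial sums strictly increase, so it exists in $\rca$. To see that $[\Psi_1^Y]^{d+1}$ is $c$-monochromatic I would fix an arbitrary $(d+1)$-subset, use $\mathbb Z$-invariance to subtract the least element, and then rewrite the successive differences as blocks of consecutive $y$'s; this last step is the grouping identity $\sum_{j=a+1}^{b}y_j=(y_1+\cdots+y_b)-(y_1+\cdots+y_a)$, a finite-sum fact provable by $\isigma$-induction on $b$.

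For $\zrt^{d+1}\rightarrow\aht^d$, given $d:\mathbb N^d\rightarrow k$ the colouring $\Phi_2^d(x_0,\ldots,x_d)=d(x_1-x_0,\ldots,x_d-x_{d-1})$ again exists by comprehension, and $\zrt^{d+1}$ produces a monochromatic infinite $X$. The only genuinely recursive construction is the thinning $X'=\{x_n\mid n\in\mathbb N\}$, where $x_{n+1}$ is the least element of $X$ with $x_{n+1}-x_n>x_n-x_{n-1}$; since $X$ is infinite, hence unbounded, such a witness exists at every stage, so the defining function is total and $X'$ is infinite. Then $\Psi_2^X=\{x_{n+1}-x_n\mid n\in\mathbb N\}$ exists by comprehension, and the fact that $\afs^d(\Psi_2^X)$ is $d$-monochromatic follows from the telescoping identity $\sum_{j=a}^{b-1}(x_{j+1}-x_j)=x_b-x_a$, once more a consequence of $\isigma$.

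The step requiring the most care, and the one I would present as the main obstacle, is confirming that the thinning recursion in the second direction is total over $\rca$ without appealing to induction stronger than $\isigma$. The point is that, at stage $n$, the search condition ``$u\in X$ and $u-x_n>x_n-x_{n-1}$'' is $\Sigma^0_0$ in the parameters $X,x_{n-1},x_n$, and the existence of such a $u$ is the $\Sigma^0_1$ statement that $X$ is unbounded; hence the assertion ``the finite sequence $\langle x_0,\ldots,x_n\rangle$ satisfying the recursion exists'' is $\Sigma^0_1$, and $\isigma$ suffices to prove it for all $n$ and thus to conclude that $X'$ is infinite. All remaining verifications reduce to the finite-sum identities above, so no stronger induction is ever needed, and both implications go through over $\rca$.
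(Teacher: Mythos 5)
Your proposal is correct and follows exactly the route the paper takes: the paper's own ``proof'' of this corollary is the one-sentence observation that the constructions and verifications in the proof of Theorem~\ref{higher-zrtversusaht} formalize in $\rca$ using no induction beyond $\isigma$, and your write-up simply supplies the details (existence of the colourings and partial-sum sets by $\Delta^0_1$ comprehension, $\Sigma^0_1$ induction for the totality of the thinning recursion, and the telescoping/grouping identities) that the authors leave implicit.
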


\subsection{The apartness condition}

In Carlucci's paper~\cite{carlucci-adjacent} (where the original formulation of the $\aht$ can be found), a version of the $\aht$ is considered where one requires an {\it apartness} condition on the generators of the monochromatic set. In order to understand this requirement, recall that, after~\cite{blass-hirst-simpson}, for an $x\in\mathbb N$ we let $\lambda(x)$ be the smallest position, and we let $\mu(x)$ be the largest position, where a nonzero digit appears if $x$ is written in binary notation (when reading the number from right to left, i.e., from the least significant to the most significant digit). Equivalently, and even more intuitively, one can simply think of the natural number $n$ as a finite subset of $\mathbb N$ (by letting the binary expansion of $n$ represent the characteristic function of such finite subset) and then $\lambda(n)$ and $\mu(n)$ are simply the minimum and maximum elements of this finite set. There are other ways to describe these couple of functions (for example, $\lambda(x)$ is the unique $n$ such that $2^n\mid x$ but $2^{n+1}\nmid x$, and $\mu(x)=\lfloor\log_2(x)\rfloor$), but the description in terms of the binary representation is much easier to visualize. Using this tool, we are able to state the following definition (which is already implicit in~\cite{blass-hirst-simpson}, and more explicit in~\cite[Definition 3]{carlucci-adjacent}, but for sets; we simply render below the same definition but for sequences).

\begin{definition}
A sequence $\langle x_n\big|n\in\mathbb N\rangle$ is said to satisfy {\it the apartness condition} if, for every $n\in\mathbb N$, we have $\mu(x_n)<\lambda(x_{n+1})$.
\end{definition}

The reader familiar with the definition of a {\it block sequence} on $[\mathbb N]^{<\aleph_0}$ will recognize that a sequence of natural numbers satisfies the apartness condition precisely when the finite sets corresponding to the elements of the sequence (as described in the previous paragraph) form a block sequence. Carlucci's {\it adjacent Hindman's theorem with apartness} is the principle stating the same as $\aht$ but requiring that the sequence whose set of adjacent finite sums is monochromatic satisfy the apartness condition; we will henceforth denote this principle with the symbol $\apaht$. We similarly define the higher-dimensional versions $\apaht^d$ for each $d$.

Recall the proof of Theorem~\ref{zrtversusaht}. Looking at the definitions of the relevant Turing functionals $\Psi_1,\Psi_2$, the reader will notice that, if the infinite set $Y=\{y_n\big|n\in\mathbb N\}$ satisfies the apartness condition, then the set $\Psi_1^Y$ will have an increasing enumeration $\{x_n\big|n<\mathbb N\}$ satisfying $\mu(x_n-x_{n-1})<\lambda(x_{n+1}-x_n)$. Conversely, given an infinite set $X=\{x_n\big|n<\mathbb N\}$ (increasing enumeration) satisfying $\mu(x_n-x_{n-1})<\lambda(x_{n+1}-x_n)$, then the set $\Psi_2^X$, as defined in the proof of Theorem~\ref{zrtversusaht}, which is simply $\{x_{n+1}-x_n\big|n\in\mathbb N\}$ (note that, in this case, the auxiliary subset $X'\subseteq X$ is simply $X$ itself), will satisfy the apartness condition. This motivates the following definition of a condition that is to versions of Ramsey's theorem what the apartness condition is to versions of Hindman's theorem.

\begin{definition}\hfill
\begin{enumerate}
\item A set $X$ is said to satisfy the {\bf separation condition} if, letting $\langle x_n\big|n\in\mathbb N\rangle$ be the increasing enumeration of $X$, we have $\mu(x_n-x_{n-1})<\lambda(x_{n+1}-x_n)$ for every $n\in\mathbb N$.  
\item Given an $n\in\mathbb N$, the {\bf $\mathbb Z$-Ramsey's theorem for $n$-subsets with separation} is the statement that every $\mathbb Z$-invariant colouring of $n$-subsets admits an infinite monochromatic set that, when enumerated increasingly, yields a sequence satisfying the separation condition. We denote this principle with the symbol $\sepzrt^n$.
\end{enumerate}
\end{definition}

Notice that the separation condition is not hereditary to subsets, i.e., if $Y\subseteq X$ then $Y$ does not necessarily satisfy the separation condition even if $X$ does; therefore, $\sepzrt$ is {\it a priori} a stronger statement than $\zrt$ (it cannot be obtained simply by applying $\zrt$ to a set that satisfies the separation condition; furthermore, it is not even clear that it can be obtained by first applying $\zrt$ and then thinning out the resulting monochromatic set). More concretely, $\sepzrt^n$ implies $\zrt^n$ in $\rca$ or, in terms of Weihrauch reducibility, we have $\zrt^n\leq\sepzrt^n$. Looking carefully (whether literally as Computability statements, or as proofs in $\rca$) at the proofs from Theorem~\ref{higher-zrtversusaht} (as described two paragraphs above but now in this more general context), yields the following.

\begin{theorem}\label{adj-zrtversusaht}\hfill
\begin{enumerate}
\item $\sepzrt^{d+1}\weiseq\apaht^d$,
\item for each $d$, the statements $\sepzrt^{d+1}$ and $\apaht^d$ are equivalent over $\rca$.
\end{enumerate}
\end{theorem}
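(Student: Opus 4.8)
The plan is to prove both parts at once by revisiting the proof of Theorem~\ref{higher-zrtversusaht} with exactly the same four Turing functionals $\Phi_1,\Psi_1,\Phi_2,\Psi_2$, and then checking that each of them additionally respects the relevant structural condition: apartness on the Hindman side and separation on the Ramsey side. Since Theorem~\ref{higher-zrtversusaht} already establishes that these functionals implement the monochromaticity correspondence between $(d+1)$-subsets and adjacent $d$-tuples of adjacent sums, the only genuinely new content is to verify that, along each reduction, a solution satisfying the extra hypothesis is carried to a solution satisfying the extra requirement. Throughout I would use the elementary fact that, for positive integers $a,b$, the inequality $\mu(a)<\lambda(b)$ forces $a<b$ (indeed $a<2^{\mu(a)+1}\leq 2^{\lambda(b)}\leq b$).

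For the reduction $\sepzrt^{d+1}\weisleq\apaht^d$ I keep $\Phi_1^c(y_1,\ldots,y_d)=c(0,y_1,\ldots,y_1+\cdots+y_d)$ and $\Psi_1^Y=\{y_1+\cdots+y_n\mid n\in\mathbb N\}$. Given a $\mathbb Z$-invariant $c$, the functional $\Phi_1$ produces an instance $\Phi_1^c$ of $\apaht^d$; let $Y=\{y_n\mid n\in\mathbb N\}$ be a solution, i.e.\ an infinite set whose increasing enumeration satisfies the apartness condition and whose set $\afs^d(Y)$ is $\Phi_1^c$-monochromatic. By Theorem~\ref{higher-zrtversusaht} the set $\Psi_1^Y$ is then $c$-monochromatic, so it only remains to see that $\Psi_1^Y$ satisfies the separation condition. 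But the increasing enumeration $x_n=y_1+\cdots+y_n$ of $\Psi_1^Y$ has consecutive differences $x_n-x_{n-1}=y_n$, so the required inequality $\mu(x_n-x_{n-1})<\lambda(x_{n+1}-x_n)$ is precisely the apartness inequality $\mu(y_n)<\lambda(y_{n+1})$, which holds by hypothesis. Since $\Psi_1$ consults only $Y$ and not $c$, this is a strong Weihrauch reduction.

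For the reduction $\apaht^d\weisleq\sepzrt^{d+1}$ I keep $\Phi_2^d(x_0,\ldots,x_d)=d(x_1-x_0,\ldots,x_d-x_{d-1})$ and the functional $\Psi_2$ that first thins $X$ to the auxiliary set $X'$ of strictly-increasing-gap elements and then outputs the sequence of gaps. Given $d\colon\mathbb N^d\to k$, the colouring $\Phi_2^d$ is $\mathbb Z$-invariant and hence an instance of $\sepzrt^{d+1}$; let $X=\{x_n\mid n\in\mathbb N\}$ be a solution, so $X$ is $\Phi_2^d$-monochromatic and its increasing enumeration satisfies the separation condition. The key observation is that separation forces the consecutive gaps of $X$ to be strictly increasing: applying the fact above with $a=x_n-x_{n-1}$ and $b=x_{n+1}-x_n$ gives $x_n-x_{n-1}<x_{n+1}-x_n$. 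Consequently the thinning procedure inside $\Psi_2$ selects every element, i.e.\ $X'=X$, and $\Psi_2^X=\{x_{n+1}-x_n\mid n\in\mathbb N\}$ is exactly the gap sequence. Writing $w_n=x_{n+1}-x_n$, the separation inequality reads $\mu(w_{n-1})<\lambda(w_n)$, which is the apartness condition on $\Psi_2^X$; Theorem~\ref{higher-zrtversusaht} supplies that $\afs^d(\Psi_2^X)$ is $d$-monochromatic, and again $\Psi_2$ uses only $X$, giving the strong reduction. Combining the two directions yields part (1).

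Part (2) then follows because, exactly as for Theorem~\ref{higher-zrtversusaht}, every verification above is an elementary computation about $\lambda$ and $\mu$ using no induction beyond $\isigma$, so the two reductions, read as implications, are provable in $\rca$, yielding $\rca\vdash\sepzrt^{d+1}\leftrightarrow\apaht^d$. I do not expect a serious obstacle, since the whole argument is a structural refinement of an already-established equivalence; the one point demanding genuine care is the claim $X'=X$ in the second reduction. One must confirm that the separation condition, an inequality about binary digit positions, really does guarantee strictly increasing gaps, so that the gap-extracting functional $\Psi_2$ discards no elements and thereby does not destroy the apartness of its output—this is precisely where the implication $\mu(a)<\lambda(b)\Rightarrow a<b$ is essential.
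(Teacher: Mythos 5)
Your proposal is correct and follows essentially the same route as the paper, which likewise obtains the theorem by re-examining the functionals $\Phi_1,\Psi_1,\Phi_2,\Psi_2$ from Theorem~\ref{higher-zrtversusaht} and checking that apartness of $Y$ transfers to separation of $\Psi_1^Y$ (since the consecutive differences of $\Psi_1^Y$ are exactly the $y_n$) and that separation of $X$ forces $X'=X$ and apartness of the gap sequence $\Psi_2^X$. Your explicit justification that $\mu(a)<\lambda(b)$ implies $a<b$, hence that separation yields strictly increasing gaps, is precisely the detail the paper leaves implicit when it asserts that the auxiliary set $X'$ equals $X$.
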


For a relationship between the versions of $\zrt$ with or without the separation condition (or, equivalently, between the versions of $\aht$ with or without apartness), Carlucci~\cite{carlucci-adjacent} proves that $\rt^2$ implies $\apaht$. Using the same reasoning, we prove the higher-dimensional general version of this result.

\begin{theorem}
For each $n\in\mathbb N$, we have $\apaht^n\weisleq\rt^{n+1}$.
\end{theorem}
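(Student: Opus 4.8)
The plan is to avoid extracting an apartness set from a Ramsey-monochromatic set directly (which would require control over binary carries that plain $\rt^{n+1}$ does not provide), and instead to build the apartness into the reduction from the start by working over the fixed base sequence $e_i=2^i$. Its adjacent sums telescope, $e_a+e_{a+1}+\cdots+e_b=2^{b+1}-2^a$, and in binary such a sum is exactly the interval $\{a,\ldots,b\}$, so it produces no interaction between disjoint blocks. Accordingly, for a colouring $f\colon\mathbb N^n\longrightarrow k$ I would let the first functional $\Phi$ output the colouring $\Phi^f\colon[\mathbb N]^{n+1}\longrightarrow k$ given, for $p_0<p_1<\cdots<p_n$, by
\[
\Phi^f(\{p_0,\ldots,p_n\})=f(2^{p_1}-2^{p_0},\,2^{p_2}-2^{p_1},\,\ldots,\,2^{p_n}-2^{p_{n-1}}),
\]
which is plainly computable from $f$. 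This is the natural higher-dimensional analogue of the colouring Carlucci uses to derive $\apaht$ from $\rt^2$.

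Next I would apply $\rt^{n+1}$ to $\Phi^f$ to obtain an infinite monochromatic set $H=\{h_0<h_1<\cdots\}$, say in colour $j$. The second functional $\Psi$ would group the base blocks at the breakpoints given by $H$: I set $\Psi^H=Y=\langle y_m\mid m\in\mathbb N\rangle$ with
\[
y_m=e_{h_m}+e_{h_m+1}+\cdots+e_{h_{m+1}-1}=2^{h_{m+1}}-2^{h_m}.
\]
Note that $\Psi^H$ depends only on $H$, so the reduction will be strong. The binary support of $y_m$ is the interval $\{h_m,\ldots,h_{m+1}-1\}$, whence $\lambda(y_m)=h_m$ and $\mu(y_m)=h_{m+1}-1$; therefore $\mu(y_m)=h_{m+1}-1<h_{m+1}=\lambda(y_{m+1})$, so $Y$ satisfies the apartness condition (and is in particular strictly increasing, hence a genuine set).

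Finally I would verify that $\afs^n(Y)$ is $f$-monochromatic in colour $j$. A typical element of $\afs^n(Y)$ has the form $(z_1,\ldots,z_n)$ with $z_1=\sum_{k=k_0}^{k_1}y_k$ and $z_r=\sum_{k=k_{r-1}+1}^{k_r}y_k$ for $r\geq 2$, where $k_0\leq k_1<k_2<\cdots<k_n$. Telescoping each block sum and setting $p_0=h_{k_0}$ and $p_r=h_{k_r+1}$ for $1\leq r\leq n$ gives $z_1=2^{p_1}-2^{p_0}$ and $z_r=2^{p_r}-2^{p_{r-1}}$ for $r\geq 2$, while the inequalities $k_0\leq k_1$ and $k_{r-1}<k_r$ guarantee $p_0<p_1<\cdots<p_n$ with all $p_r\in H$. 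Hence $f(z_1,\ldots,z_n)=\Phi^f(\{p_0,\ldots,p_n\})=j$, yielding the desired strong Weihrauch reduction $\apaht^n\weisleq\rt^{n+1}$. The main thing to get right is precisely this index bookkeeping---the off-by-one between the first block $[k_0,k_1]$ and the later blocks $[k_{r-1}+1,k_r]$, and the check that the shifted exponents $p_r$ remain strictly increasing---since everything else (the telescoping identity, the apartness computation, and the definitions of $\Phi$ and $\Psi$) is routine and manifestly formalizable using no more than $\isigma$.
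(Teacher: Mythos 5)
Your proposal is correct and follows essentially the same route as the paper: encode the breakpoints of a block decomposition as elements of a Ramsey-monochromatic set, using elements of the form $2^{b}-2^{a}$ so that adjacent sums telescope and apartness is automatic. Your write-up is in fact somewhat more careful than the paper's (which leaves the final monochromaticity verification implicit and has a minor indexing slip in the displayed formula for $\Phi^c$), but the underlying reduction is identical.
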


\begin{proof}
Consider the Turing functionals $\Phi$ and $\Psi$ given as follows: for a colouring $c:[\mathbb N]^n\longrightarrow k$, let $\Phi^c:[\mathbb N]^{n+1}\longrightarrow k$ be given by
\begin{equation*}
\Phi^c(x_0,\ldots,x_n)=c(2^{x_1}+2^{x_1+1}+\cdots+2^{x_2-1},2^{x_2}+2^{x_2+1}+\cdots+2^{x_3-1},\ldots,2^{x_{n-1}}+2^{x_{n-1}+1}+\cdots+2^{x_n}).
\end{equation*}
On the other hand, given a $Y=\{y_n\big|n\in\mathbb N\}$ enumerated increasingly, let 
\begin{equation*}
\Psi^Y=\{2^{y_n}+2^{y_n+1}+\cdots+2^{y_{n+1}}\big|n\in\mathbb N\}.
\end{equation*}
Then $[Y]^{n+1}$ is $\Phi^c$-monochromatic if and only if $\afs^n(\Psi^Y)$ is $c$-monochromatic.
\end{proof}

This way, we obtain the following diagram, where the arrows can mean either implication under $\rca$, or Weihrauch reducibility (and we make no further claims as to whether the arrows are reversible or not; some of these collapse under $\rca$ but not necessarily under Weihrauch reduction)

\begin{equation*}
\centerline{\xymatrix{
\cdots \ar@{->}[r] & \sepzrt^{n+1} \ar@{->}[r] \ar@{<->}[d] & \zrt^{n+1} \ar@{->}[r] \ar@{<->}[d] & \rt^n \ar@{->}[r] & \sepzrt^n \ar@{->}[r] \ar@{<->}[d] & \zrt^n \ar@{->}[r] \ar@{<->}[d] & \rt^{n-1} \ar@{->}[r] & \cdots \\
\cdots & \apaht^n \ar@{->}[r] & \aht^n &  & \apaht^{n-1} \ar@{->}[r] & \aht^{n-1} &  & \cdots 
}}
\end{equation*}

Towards the right-extreme of this diagram, we find the following configuration:

\begin{equation*}
\centerline{\xymatrix{
\cdots \ar@{->}[r] & \rt^3 \ar@{->}[r] & \sepzrt^3 \ar@{->}[r] \ar@{<->}[d] & \zrt^3 \ar@{->}[r] \ar@{<->}[d] & \rt^2 \ar@{->}[r] & \sepzrt^2 \ar@{->}[r] \ar@{<->}[d] & \zrt^2 \ar@{->}[r] \ar@{<->}[d] & \rt^1\ar@{<->}[d] \\
\cdots &  & \apaht^2 \ar@{->}[r] & \aht^2 &  & \apaht \ar@{->}[r]  & \aht  & \text{pigeonhole} \\
}}
\end{equation*}

(Other principles could be added to the diagram, e.g., the fact that $\apaht$ implies $\mathsf{D}^2$~\cite{carlucci-adjacent}, but we only added the principles explicitly studied in this paper to the diagram.) This is very informative from the perspective of Weihrauch reducibility. From the perspective of Reverse Mathematics, on the other hand, a huge portion of the diagram collapses, since it is known that $\rt^3$ already implies $\aca$. Therefore, the first diagram (up to $n=4$) consists of equivalent statements, and the only question remaining should be regarding the second diagram. The next (and last) section of the paper addresses this question.

\section{A lower bound and questions.}

In this section, we show that $\aht^2$ implies $\aca$ over $\rca$. Thus, the leftmost five nodes of the last shown diagram collapse, all of them being equivalent, and really the only point of separation happens in the arrow connecting $\zrt^3$ to $\rt^2$. The following result, stated in both the language of Computability Theory and of Reverse Mathematics, is proved with much the same ideas as the classical proof that $\rt^3$ implies $\aca$\footnote{There are other examples of statements where dimension 2 suffices to prove $\aca$, such as the Canonical Ramsey's Theorem, the Regressive Function's Theorem~\cite{mileti} or the Regressive Hindman's Theorem~\cite{carlucci-mainardi}.}.

\begin{theorem}\label{lower-bound}\hfill
\begin{enumerate}
\item There exists a computable instance of $\aht^2$ whose solutions compute $0'$.
\item Over $\rca$, $\aht^2$ (equivalently, $\zrt^3$) implies $\aca$.
\end{enumerate}
\end{theorem}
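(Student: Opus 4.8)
The plan is to use the equivalence already established (the corollary to Theorem~\ref{higher-zrtversusaht}, giving $\rca\vdash\zrt^3\leftrightarrow\aht^2$ and $\zrt^3\weiseq\aht^2$) to reduce everything to $\zrt^3$, and then to mimic the classical encoding of the halting problem into a colouring of triples. Recall that, over $\rca$, $\aca$ is equivalent to the statement that the range of every injection exists, and that the Computability-theoretic version amounts to exhibiting a computable colouring all of whose monochromatic solutions compute $0'$. So I would fix an injective computable $f$, write $A=\ran(f)$ and $A_s=\{f(0),\ldots,f(s-1)\}$ for its stagewise approximation (so each $A_s$ is finite and $A_s\subseteq A_{s+1}$), and aim to build a computable \emph{$\mathbb Z$-invariant} colouring of triples such that any infinite homogeneous set computes $A$. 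For part~(1) I would take $A\equiv_T 0'$ (a $\Sigma^0_1$ set of that degree); for part~(2) I would run the same construction uniformly on an arbitrary injection $f$ inside $\rca$ and conclude that $\ran(f)$ exists.

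Since a $\mathbb Z$-invariant colouring of triples depends only on the two gaps $a=x_1-x_0$ and $b=x_2-x_1$ (as the proof of Theorem~\ref{zrtversusaht} makes explicit), the colouring must be phrased entirely in terms of stage-approximations indexed by $a$ and $a+b$. The natural ``stability'' colouring is
\begin{equation*}
c(x_0,x_1,x_2)=\begin{cases}0 & \text{if } A_a\cap[0,a]=A_{a+b}\cap[0,a],\\ 1 & \text{otherwise,}\end{cases}
\end{equation*}
which is visibly computable and $\mathbb Z$-invariant. The extraction from the stable colour is then exactly as in the $\rt^3$ argument: if $H=\{h_0<h_1<\cdots\}$ is homogeneous in colour $0$, then fixing $i<j$ and letting $k\to\infty$ forces $A_{h_j-h_i}\cap[0,h_j-h_i]=A\cap[0,h_j-h_i]$, because $A_{h_k-h_i}\cap[0,h_j-h_i]$ increases monotonically inside the finite set $[0,h_j-h_i]$ and converges to $A\cap[0,h_j-h_i]$. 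Since the differences $h_j-h_i$ are cofinal, reading off $A_m\cap[0,m]$ for $m=h_j-h_i$ computes $A\leq_T H$, hence $0'\leq_T H$.

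The main obstacle is the \emph{unstable} colour. In the classical $\rt^3$ proof one has three independent coordinates $h_i<h_j<h_k$, so one can fix the threshold $h_i$ and push \emph{both} stages $h_j,h_k$ to infinity, which kills colour~$1$ by monotonicity; under $\mathbb Z$-invariance only two gaps are available, so fixing the threshold $a$ also fixes the first stage, and colour~$1$ merely asserts that $A\cap[0,a]$ has not settled by stage $a$ for every gap $a=h_j-h_i$, which is not automatically impossible. To get around this I would pass to a product colouring $c=(c_1,c_2)$, where $c_2$ is the colouring above (its colour~$0$ yields the computation) and $c_1$ instead compares $A_b\cap[0,a]$ with $A_{a+b}\cap[0,a]$. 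The point of $c_1$ is that it has \emph{no} infinite colour-$1$ homogeneous set, uniformly in $f$: fixing $i<j$ and sending $k\to\infty$ makes both $A_{h_k-h_j}\cap[0,h_j-h_i]$ and $A_{h_k-h_i}\cap[0,h_j-h_i]$ converge to $A\cap[0,h_j-h_i]$, so some triple is colour~$0$. Thus any infinite homogeneous $H$ has $c_1\equiv 0$, and the only case left to eliminate is $c_1\equiv 0$ together with $c_2\equiv 1$.

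The hard part will be closing that last case. Here I expect to use the structural constraint that $c_1\equiv 0$ imposes on $H$: it forces $H$ to avoid, for each pair $i<j$, the interval of length $h_j-h_i$ located at the (noncomputable) settling time of $A\cap[0,h_j-h_i]$, and I would try to turn this interval-avoidance into either a contradiction with $c_2\equiv 1$ or a direct computation of the settling function from $H$. Failing a fully uniform argument, for the Computability statement~(1) I can instead engineer the specific instance: choose the $\Sigma^0_1$ set $A\equiv_T 0'$ so that its set of ``unsettled gap-lengths'' $\{d:A_d\cap[0,d]\neq A\cap[0,d]\}$ is sparse enough (super-increasing) that it cannot contain the full difference set of any infinite set, which rules out $c_2\equiv 1$ outright while keeping $A$ of degree $0'$. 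Once the unstable case is disposed of, every solution computes $A$; taking $A\equiv_T 0'$ gives part~(1). Finally, since none of the steps uses induction beyond $\isigma$, the same construction, carried out for an arbitrary injection $f$ in $\rca$, shows that $\ran(f)$ is computed by any solution and hence exists, yielding part~(2) that $\aht^2$ (equivalently $\zrt^3$) implies $\aca$ over $\rca$.
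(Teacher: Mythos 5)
Your reduction to $\zrt^3$ and the gap-based ``stability'' colouring is a genuinely different route from the paper's (which works directly with $\aht^2$ and the Blass--Hirst--Simpson functions $\lambda,\mu$), and you correctly isolate the real difficulty: a $\mathbb Z$-invariant colouring of triples has only two free parameters $a=x_1-x_0$ and $b=x_2-x_1$, so the ``threshold'' and the ``first stage'' of the usual $\rt^3$ argument get identified, and the unstable colour cannot be dismissed by the standard monotonicity argument. But the proposal does not actually close that case. The residual situation $c_1\equiv 0$, $c_2\equiv 1$ is left as something you ``expect'' or ``would try'' to handle, and the concrete fallback you offer for part (1) is provably impossible: writing $U=\{d:A_d\cap[0,d]\neq A\cap[0,d]\}$, one has $[f(t),t]\subseteq U$ whenever $f(t)\leq t$, and if the lengths $t-f(t)$ of these intervals were bounded by some $k$ then $x\in A$ iff $x\in A_{x+k+1}$, making $A$ computable. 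So for \emph{every} computable enumeration of a noncomputable c.e.\ set, $U$ contains arbitrarily long intervals of consecutive integers and in particular is never super-increasing. Moreover, even if some non-uniform engineering of $A$ did work, it would only give part (1); part (2) requires handling an arbitrary injection inside $\rca$, so your closing sentence does not follow from the fallback.

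The idea you are missing is the paper's way of decoupling the threshold from the stage: instead of using the gap $a$ for both roles, the paper colours a pair $(x,y)$ by comparing $(0'\cap\lambda(x))[\mu(x)]$ with $(0'\cap\lambda(x))[\mu(y)]$, where $\lambda(x)$ and $\mu(x)$ are the positions of the least and greatest nonzero binary digits of $x$. A single number thus carries a small threshold $\lambda(x)$ and an independently large stage $\mu(x)$. A first colour component ($\lambda(x)<\lambda(y)$?) is forced to be homogeneous in colour $1$ by the adjacency structure (among three consecutive terms with equal $\lambda$, summing two of them strictly increases $\lambda$), so $\lambda$ is strictly increasing along the solution; consequently $\lambda(x_n+\cdots+x_{n+l})=\lambda(x_n)$ while $\mu(x_n+\cdots+x_{n+l})\to\infty$ as $l\to\infty$. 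This is exactly the ``push the stage to infinity while freezing the threshold'' step that your gap-based colouring cannot perform, and it is what eliminates the unstable colour and yields that every solution computes $0'$, uniformly enough to give both (1) and (2).
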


\begin{proof}
We write explicitly the proof of (1), and only mention in passing that it readily formalizes to yield (2). Consider the colouring $c:\mathbb N^2\longrightarrow 2\times 2$ given by letting $c(x,y)=(i,j)$, where $i$ equals $1$ if and only if $\lambda(x)<\lambda(y)$, and $j$ equals $1$ if and only if $(0'\cap\lambda(x))[\mu(x)]=(0'\cap\lambda(x))[\mu(y)]$. Suppose that $\vec{x}=(x_n\big|n\in\mathbb N)$ is a sequence such that $\afs^2(\vec{x})$ is $c$-monochromatic. Let $n_0$ be such that $\lambda(x_{n_0})$ is as small as possible. Comparing $\lambda(x_{n_0})$ with $\lambda(x_{n_0+1})$ and $\lambda(x_{n_0+2})$, we see that either $\lambda(x_{n_0})<\lambda(x_{n_0+1})$, or $\lambda(x_{n_0+1})<\lambda(x_{n_0+2})$, or $\lambda(x_{n_0})=\lambda(x_{n_0+1})=\lambda(x_{n_0+2})$ which implies $\lambda(x_{n_0})<\lambda(x_{n_0+1}+x_{n_0+2})$. In either case, we have obtained an element of $\afs^2(\vec{x})$ whose colour is of the form $(1,j)$; hence all elements of $\afs^2(\vec{x})$ have the same colour and so $\lambda$ is strictly increasing on the sequence $\vec{x}$. In particular, this implies that $\lambda(x_n+x_{n+1}+\cdots+x_{n+l})=\lambda(x_n)$ for all $n,l$. Therefore, for any $n$ it is possible to take an $l$ large enough that $(0'\cap\lambda(x_n))[\mu(x)]=0'\cap\lambda(x_n)$ for $x=x_n+x_{n+1}+\cdots+x_{n+l}$; then any element of $\afs^2(\vec{x})$ having $x$ as its first coordinate will have colour $(1,1)$. This implies that for every $n\in\mathbb N$ we have $(0'\cap\lambda(x_n))[\mu(x_n)]=0'\cap\lambda(x_n)$ ---otherwise, taking any $t$ large enough that $(0'\cap\lambda(x_n))[\mu(y)]=0'\cap\lambda(x_n)$ for $y=x_{n+1}+\ldots+x_{n+t}$, we would obtain the element $(x_n,y)\in\afs^2(\vec{x})$ with colour $(1,0)$---. In other words, the sequence $\vec{x_n}$ is able to compute $0'$ (given any $m\in\mathbb N$, take an $n$ large enough that $\lambda(x_n)>m$ and check whether $m$ belongs to $(0'\cap\lambda(x_n))[\mu(x_n)]$).
\end{proof}

\begin{figure}[t]
\centering
\begin{equation*}
\centerline{\xymatrix{
\aca \ar@{=>}[r] \ar@{<->}[d] & \rt^2 \ar@{->}[r] & \sepzrt^2 \ar@{->}[r] \ar@{<->}[d] & \zrt^2 \ar@{->}[r] \ar@{<->}[d] & \rt^1\ar@{<->}[d] \\
\zrt^3 &  & \apaht \ar@{->}[r]  & \aht  & \text{pigeonhole} \\
}}
\end{equation*}
\caption{Diagram of implications over $\rca$. The double arrow indicates that we know the implication is not reversible.}
\label{fig:diagrama}
\end{figure}

We summarize the results of the paper, from the perspective of Reverse Mathematics, by presenting the diagram in Figure~\ref{fig:diagrama}. Whether any of the remaining arrows (except for the one connecting $\aca$ to $\rt^2$) is reversible remains an open question. For example, it is conceivable that $\aht$ already implies $\apaht$, although some ideas that have been used for similar results (e.g.~\cite[Lemma 1]{carlucci-kolodziejczyk-et-al}) do not seem to work in this context because the requirement that the sums be adjacent, thus preventing us to skip summands, poses a strong restriction. Similar difficulties arise when one attempts to obtain better lower bounds for $\zrt^2$: Carlucci~\cite[Prop. 3]{carlucci-adjacent} has found that the {\it Increasing Polarized Ramsey's Theorem for pairs}\footnote{That is, the statement that for every finite colouring $c:[\mathbb N]^2\longrightarrow k$ there are two infinite sets $X,Y$ such that all pairs of the form $\{x,y\}$ with $x\in X$, $y\in Y$, and $x<y$, have the same colour.}, denoted $\mathsf{IPT}^2$, is a lower bound for $\apaht$; the apartness condition plays a crucial r\^ole in Carlucci's proof, and we have been unable to successfully find a suitable lower bound for $\aht$ without this condition\footnote{It is conceivable that some of the ideas from~\cite{reverse-hindman-exactly-two} could be useful to obtain such lower bound results without separation/apartness. We are grateful to one of the anonymous referees for this suggestion.}. We do have a conjecture but state it in the form of a question below. A final consideration arises from thinking about versions of the principles we have stated for a bounded number of colours. For example, one could consider, e.g., the principle $\aht_k$ which states the same as $\aht$ but only for colourings with at most $k$ colours. Note that, from this viewpoint, Theorem~\ref{lower-bound} really proves that $\zrt^3_4$ implies $\aca$, but it is not clear whether that $4$ could be lowered even further. We finalize the paper by stating explicitly these questions.

\begin{question}\label{questions}\hfill
\begin{enumerate}
\item Does $\zrt^2$ (equivalently, $\aht$) imply $\sepzrt^2$ (equivalently, $\apaht$)?
\item Can one prove, under $\rca$ (or possibly under $\rca+\mathsf{B}\Sigma_2^0$) that $\aht$ (equivalently, $\zrt^2$) implies $\mathsf{D}^2$?
\item What are the provable implication relations between the different ``bounded colour'' versions of $\aht_k^n$, for various $k$ (equivalently for $\zrt_k^n$, for various $k$)?
\end{enumerate}
\end{question}

The principle $\mathsf{D}^2$ mentioned in point (2) of Questions~\ref{questions} is the statement, as considered in~\cite{dzhafarov-hirst-polarized}, that for every stable $c:[\mathbb N]^2\longrightarrow k$ (where ``stable'' means for each $n$, the function $m\longmapsto c(\{n,m\})$ is eventually constant) there exists an infinite set $X\subseteq\mathbb N$ and a colour $i<k$ such that for each $n\in X$, we have $\displaystyle{\lim_{m\to\infty}c(\{n,m\})=i}$; this statement is implied by $\mathsf{IPT}^2$ by a result of Dzhafarov and Hirst~\cite{dzhafarov-hirst-polarized}. On the other hand, regarding point (3) of Questions~\ref{questions}, it is worth noting that D. Tavernelli~\cite[Section 2.3]{tavernelli} has obtained results showing that the versions of $\aht$ with more colours are not Weihrauch-reducible to the versions with fewer colours, although this still leaves open the possibility of having some version of $\aht$ with more colours being {\it provable} (over $\rca$, or maybe over $\rca+\mathsf{B}\Sigma_2^0$) from another version with fewer colours.


\begin{thebibliography}{brot-cao-fernandez}

\bibitem{blass-hirst-simpson}
A. R. Blass, J. L. Hirst, and S. G. Simpson,
{\em Logical analysis of some theorems of combinatorics and topological dynamics.}
Contemp. Math.
{\bf 65} (1987), 125--156.

\bibitem{carlucci-adjacent}
L. Carlucci,
{\em A weak variant of Hindman's Theorem stronger than Hilbert's theorem.}
Arch. Math. Logic
{\bf 57} (2018), 381--389.

\bibitem{carlucci-overview}
L. Carlucci,
{\em Restrictions of Hindman's Theorem: An Overview.}
In L. De Mol. et al. (Eds.),
``Connecting with Computability'', Proceedings of CiE2021,
LNCS 12813, pp. 94--105.

\bibitem{carlucci-mainardi}
L. Carlucci and L. Mainardi,
{\em Regressive versions of Hindman's theorem.}
Arch. Math. Logic
{\bf 63} (2024), 447--472.

\bibitem{reverse-hindman-exactly-two}
B. F. Csima, D. D. Dzhafarov, D. R. Hirschfeldt, K. G. Jockusch, Jr., R. Solomon, and L. Brown Westrick,
{\em The reverse mathematics of Hindman's Theorem for sums of exactly two elements.}
Computability
{\bf 8} (2018), 253--263.

\bibitem{dzhafarov-hirst-polarized}
D. D. Dzhafarov and J. L. Hirst,
{\em The polarized Ramsey's theorem.}
Arch. Math. Logic
{\bf 48} (2009), 141--157.


\bibitem{dzhafarov-mummert-textbook}
D. D. Dzhafarov and C. Mummert,
{\em Reverse Mathematics. Problems, Reductions, and Proofs.}
Theory and Applications of Computability,
Springer, 2022.


\bibitem{carlucci-kolodziejczyk-et-al}
L.Carlucci, L. A. Ko\l{}odziejczyk, F. Lepore, and K. Zdanowski,
{\em New Bounds on the Strength of Some Restrictions of Hindman’s Theorem.}
In J. Kari, F. Manea, and I. Petre (eds.), ``Unveiling Dynamics and Complexity. 13th Conference on Computability in Europe, CiE 2017.''
Springer, 2017 (pp. 210--220).

\bibitem{hirschfeldt}
D. R. Hirschfeldt,
{\em Slicing the Truth. On the Computable and Reverse Mathematics of Combinatorial Principles.}
Lecture Note Series 28.
Institute for Mathematical Sciences, National University of Singapore, 2015.

\bibitem{jockusch}
C. G. Jockusch, Jr.,
{\em Ramsey’s Theorem and recursion theory.}
J. Symbolic Logic
{\bf 37} (1972), 268--280.

\bibitem{mileti}
J. R. Mileti,
{\em The Canonical Ramsey Theorem and Computability Theory.}
Trans. Amer. Math. Soc.
{\bf 360} (2008), 1309--1340.

\bibitem{milliken-thm}
K. R. Milliken,
{\em Ramsey's Theorem with Sums or Unions.}
J. Combin. Theory Ser. A
{\bf 18} (1975), 276--290.

\bibitem{petrenko-protasov}
O. Petrenko and I. Protasov,
{\it Selective and Ramsey Ultrafilters on $G$-spaces}.
Notre Dame J. Form. Log.
{\bf 58}~no. 3 (2017), 453--459.

\bibitem{simpson-subsystems}
S. G. Simpson,
{\em Subsystems of Second Order Arithmetic. Second Edition.}
Perspectives in Logic, ASL,
Cambridge University Press, 2009.

\bibitem{tavernelli}
D. Tavernelli,
{\it On the strength of restrictions of Hindman's theorem}.
Master's Degree (Corso di Laurea Magistrale) Thesis,
Sapienza Universit\'a di Roma, 2018.

\bibitem{taylor-thm}
A. D. Taylor,
{\em A Canonical Partition Relation for Finite Subsets of $\omega$.}
J. Combin. Theory Ser. A
{\bf 21} (1976), 137--146.

\end{thebibliography}
\end{document}